\documentclass{amsart}

\usepackage{amsmath}
\usepackage{amssymb}
\usepackage{amsthm}
\usepackage{latexsym}
\usepackage{mathrsfs}
\usepackage{stmaryrd}
\usepackage{verbatim}
\usepackage{xcolor}

\setlength{\textwidth}{\paperwidth}
\addtolength{\textwidth}{-3.2in}
\calclayout

\newtheorem{theorem}{Theorem}
\newtheorem{definition}[theorem]{Definition}
\newtheorem{lemma}[theorem]{Lemma}

\newtheorem{corollary}[theorem]{Corollary}

\theoremstyle{definition}

\renewcommand{\AA}{\mathbb{A}}

\newcommand{\PP}{\mathbb{P}}

\newcommand{\RR}{\mathbb{R}}

\newcommand{\Trop}{\operatorname{Trop}}

\begin{document}
\title[Avoidance loci and tropicalizations of real bitangents]{Avoidance loci and tropicalizations of real bitangents to plane quartics}
\author {Hannah Markwig}
\address {Universit\"at T\"ubingen, Fachbereich Mathematik, Auf der Morgenstelle 10, 72076 T\"ubingen, Germany }
\email {hannah@math.uni-tuebingen.de}
\author {Sam Payne}
\address {Department of Mathematics, University of Texas at Austin, 2515 Speedway, PMA 8.100, Austin, TX 78712, USA}
\email {sampayne@utexas.edu}
\author {Kris Shaw}
\address {Department of Mathematics, University of Oslo, Postboks 1053, Blindern, 0316 Oslo, Norway}
\email {krisshaw@math.uio.no}

\subjclass[2010]{14N10, 14T20,14P25}
\keywords{bitangents to plane quartics, real enumerative geometry, tropical bitangent classes, avoidance locus}
\bibliographystyle{alpha}

\begin{abstract}
We compare two partitions of real bitangents to smooth plane quartics into sets of 4: one coming from the closures of connected components of the avoidance locus and another coming from tropical geometry.  When both are defined, we use the Tarski principle for real closed fields in combination with the topology of real plane quartics and the tropical geometry of bitangents and theta characteristics to show that they coincide.
\end{abstract}

\maketitle

\bigskip

A smooth plane quartic curve over an algebraically closed field of characteristic not equal to 2 has precisely 28 bitangent lines, whose geometry is closely related to that of the 27 lines on a cubic surface. The geometric and arithmetic properties of these structures over non-closed fields are an important testing ground and have recently featured prominently, for instance, in the development of $\AA^1$-enumerative geometry and exploration of its connections to real and tropical algebraic geometry \cite{KW20, LV21, MPS23}.

A smooth plane quartic curve $X$ over the real numbers has either $4$, $8$, $16$, or $28$ real bitangents, precisely $4$ of which are disjoint from $X(\RR)$ \cite{Zeuthen}. The \emph{avoidance locus} of $X$ is the set of all real lines in $\PP^2$ that are disjoint from $X(\RR)$.  It is open in the locally Euclidean topology on the real dual projective plane $\check\PP^2(\RR)$, and each of its connected components contains precisely $4$ real bitangents in its closure \cite[Corollary~2.4]{kum:19}. Moreover, each real bitangent to $X$ is in the closure of a unique component of the avoidance locus; this follows from the correspondence between bitangents and odd theta characteristics together with the discussion of definite forms associated to odd theta characteristics in  \cite[\S2]{kum:19}.  Thus, the real bitangents are canonically partitioned into sets of $4$.

The number of components of the avoidance locus depends on the topology of $X(\RR) \subset \PP^2(\RR)$.  Let $s$ be the number of connected components of $X(\RR)$, and let $a$ be $1$ if $\PP^2(\RR) \smallsetminus X(\RR)$  is connected, and $0$ otherwise. Then the number of connected components of the avoidance locus is $2^{s-1} - 1 + a$; see the second sentence following \cite[Definition~4.1]{kum:19}. The fact that the number of connected components is $1$, $2$, $4$ or $7$ then follows from the topological classification of smooth plane quartics; see, e.g., \cite[\S7]{GrossHarris81}.  Analogous statements hold for smooth quartic curves over any real closed field, by the Tarski principle. Note, however, that real closed fields other than $\RR$ are totally disconnected in the order topology. One must use an appropriate notion of definable connectedness, e.g., as in \cite[Chapter~6]{vdD}, in place of topological connectedness, to characterize ``connected components" of semialgebraic sets such as avoidance loci. A definable set is definably connected if it cannot be written as a disjoint union of two non-empty definable open sets. 

\medskip

Tropical geometry also gives a natural partition of bitangents into sets of $4$, for plane quartics over valued fields with smooth tropicalization.  Moreover, tropical geometers have observed computationally that, for plane quartics over real closed valued fields with sufficiently general tropicalization, either all or none of the bitangents in each group of $4$ are real, i.e., rational over the real closed base field \cite{GP21, CM23}.  In this short note, we prove that this tropically observed phenomenon holds in a large and natural level of generality, for all curves with smooth tropicalization. The key step in the proof is showing that the tropical partition agrees with the partition by closures of components of the avoidance locus, whenever both are defined.  

\medskip

Let $X$ be a smooth plane quartic curve over a real-closed field $K$ equipped with a nontrivial valuation. For instance, $K$ could be the field of real Puiseux series $\RR\{\!\{t\}\!\}$ with its $t$-adic valuation.

\begin{definition}
The \emph{avoidance locus} of $X$ is the subset of $\check\PP^2(K)$ parametrizing lines that are disjoint from $X(K)$.
\end{definition}

\noindent By the Tarski principle, i.e., by elimination of quantifiers in the first order theory of real closed fields, the avoidance locus of $X$ has $1$, $2$, $4$, or $7$ definably connected components, each of which has exactly $4$ $K$-rational bitangents in its closure. Correspondingly, the curve $X$ has $4$, $8$, $16$, or $28$ $K$-rational bitangents, each of which is contained in the closure of a unique  component of the avoidance locus.

\medskip

The valuation on $K$ induces a tropicalization map, and we assume that $X \subset \PP^2$ is \emph{tropically smooth}, i.e., the dual Newton sudivision of a defining equation for $X$, induced by the valuations of the coefficients, is a unimodular triangulation of the $4$-fold dilation of the standard simplex.  Then $\Trop(X)$ has precisely $7$ equivalence classes of tropical bitangents \cite{BLMPR}.  Here, a tropical bitangent is a tropical line whose intersection with $\Trop(X)$ is either connected, or else has two connected components, each of tropical multiplicity 2, and two tropical bitangents are equivalent if they correspond to linearly equivalent theta characteristics  \cite[Definitions 3.1 and 3.8]{BLMPR}.

 Each equivalence class of tropical bitangents contains the tropicalization of precisely $4$ algebraic bitangents defined over the algebraic closure of $K$.  This is proved by studying the tropicalizations of even and odd theta characteristics \cite{JensenLen18}, and using the bijective correspondence between the 28 odd theta characteristics and the 28 bitangents of a smooth plane quartic. The odd theta characteristic corresponding to a bitangent to a plane quartic is obtained by taking the sum of the two intersection points. 

\medskip

Prior to the present work, it was known under some additional genericity hypotheses on $\Trop(X)$, via a case-by-case combinatorial and computational analysis that either $0$ or all $4$ of the geometric bitangents in each of these tropical equivalence classes are $K$-rational \cite{CM23, MPS23}  and that the $1$, $2$, $4$, or $7$ tropical bitangent classes that lift to $K$-rational bitangents behave well under suitable tropical deformations \cite{GP21}.  Our main results are as follows:

\begin{theorem} \label{thm:main}
Suppose that $X$ is a plane quartic over $K$ and $\Trop(X)$ is smooth.  Then each definably connected component of the avoidance locus of $X$ tropicalizes into a distinct equivalence class of tropical bitangents to $\Trop(X)$.  
\end{theorem}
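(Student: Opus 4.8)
The plan is to attach to each line in the avoidance locus a tropical theta characteristic, to prove that this invariant is locally constant along each definably connected component $C$, and to identify its value there with the tropicalized odd theta characteristics of the four $K$-rational bitangents in $\overline C$; the distinctness statement will then follow from a pigeonhole count. The key observation is that disjointness from $X(K)$ forces the intersection with $X$ to become ``doubled'' after tropicalization. Indeed, if $\ell\in\check\PP^2(K)$ is disjoint from $X(K)$ and meets $X$ transversally, then $\ell\cap X$ consists of four distinct points of $X(\bar K)$, none $K$-rational; since $\bar K=K(\sqrt{-1})$ and the valuation is Galois-invariant, these four points form two conjugate pairs $\{p,\bar p\}$ and $\{q,\bar q\}$ with $\trop(p)=\trop(\bar p)=:A_\ell$ and $\trop(q)=\trop(\bar q)=:B_\ell$, so $\trop(\ell\cap X)=2A_\ell+2B_\ell$. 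Set $\delta(\ell):=[A_\ell+B_\ell]\in\Pic^2(\Trop(X))$. Because $\ell\cap X$ is a hyperplane section and $\omega_X\cong\mathcal{O}_X(1)$ by adjunction for a plane quartic, $\ell\cap X\sim K_X$; since tropicalization preserves linear equivalence and, using tropical smoothness, carries $K_X$ to the tropical canonical class, $2\,\delta(\ell)$ is the tropical canonical class, so $\delta(\ell)$ lies in the finite set $\Theta$ of tropical theta characteristics. A Galois descent argument shows moreover that any line in the avoidance locus tangent to $X$ at a point of $X(\bar K)$ is necessarily a bitangent at a conjugate pair of points; hence $\delta$ is defined on $C^\circ:=C\smallsetminus\{\text{bitangents in }C\}$, obtained from $C$ by deleting finitely many points.

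Next I would show that $\delta$ is definably continuous on $C^\circ$. The assignment $\ell\mapsto\ell\cap X$ is a morphism $\check\PP^2\to X^{(4)}$, hence definably continuous, and composing with the continuous map $X^{(4)}(\bar K)\to\operatorname{Sym}^4\Trop(X)$ induced by tropicalization gives a continuous map whose restriction to $C^\circ$ factors through the doubling map $\operatorname{Sym}^2\Trop(X)\to\operatorname{Sym}^4\Trop(X)$; the latter is a homeomorphism onto its closed image, so $\ell\mapsto A_\ell+B_\ell$ is continuous, and composing with the continuous tropical Abel--Jacobi map $\operatorname{Sym}^2\Trop(X)\to\Pic^2(\Trop(X))$ yields continuity of $\delta$. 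Since $\Theta$ is finite and $C^\circ$ is definably connected --- a definably connected planar open set with finitely many points removed --- $\delta$ is constant on $C^\circ$, with some value $T_C\in\Theta$. Each of the four $K$-rational bitangents $\ell_0\in\overline C$ lies in $\overline{C^\circ}$, and since the morphism $\ell\mapsto\ell\cap X$ is defined even at $\ell_0$, the same continuity shows that $2\trop(P)+2\trop(Q)$, where $\ell_0\cap X=2P+2Q$, is the limit of the divisors $2A_\ell+2B_\ell$, so $[\trop(P)+\trop(Q)]=T_C$. But $[P+Q]$ is exactly the odd theta characteristic attached to the bitangent $\ell_0$, and by the analysis of tropicalizations of theta characteristics in \cite{JensenLen18} together with the identification of tropical bitangent classes with tropical theta characteristics in \cite{BLMPR}, $\trop([P+Q])=[\trop(P)+\trop(Q)]$ is precisely the equivalence class of tropical bitangents into which $\ell_0$ tropicalizes. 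Hence all four bitangents in $\overline C$ tropicalize into the single class corresponding to $T_C$.

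For distinctness, if $C\neq C'$ then, as each $K$-rational bitangent lies in the closure of a unique component, the two sets of four bitangents are disjoint; were their classes equal, at least eight odd theta characteristics would tropicalize into that class, contradicting the fact that each class contains the tropicalization of exactly four algebraic bitangents. Therefore distinct components tropicalize into distinct classes. I expect the main obstacle to be running the continuity argument for $\delta$ over an arbitrary real closed valued field: one must work in the semialgebraic/o-minimal category, invoking continuity of the relevant maps on tropical symmetric powers and Jacobians and the stability of definable connectedness under removal of lower-dimensional subsets (cf.\ \cite{vdD}). The conceptual heart of the proof, by contrast, is the elementary remark that disjointness from $X(K)$ makes $\trop(\ell\cap X)$ divisible by $2$, which is what renders the tropical theta characteristic --- and hence the tropical bitangent class --- legible along the entire component.
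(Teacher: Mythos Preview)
Your strategy shares with the paper the central observation that disjointness from $X(K)$ forces $\trop(\ell\cap X)$ to be divisible by $2$ --- this is exactly Lemma~\ref{lem:bitangent} --- and your pigeonhole argument for distinctness is the same as the paper's. The gap is precisely the one you flag at the end: the constancy of $\delta$ on $C^\circ$. The component $C$ is definably connected in the first-order theory of real closed fields, but $\delta$ is built from the valuation, which is \emph{not} definable in that theory. The composite $C^\circ\to X^{(4)}(\bar K)\to\operatorname{Sym}^4\Trop(X)$ is therefore not a map of semialgebraic sets, and the o-minimal machinery of \cite{vdD} does not apply to it; on the other hand, enlarging to the language of real closed valued fields introduces enough new definable open sets (valuation balls) that $C$ is no longer definably connected there. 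As written, nothing prevents $\delta$ from jumping along $C^\circ$, and the appeal to ``definable continuity'' does not go through.

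The paper closes this gap by a different mechanism: it replaces your continuity argument with a convexity one. Lemma~\ref{lem:convex} shows --- by Tarski transfer to $\RR$ together with the identification of avoiding lines with definite nowhere-vanishing differentials from \cite{kum:19} --- that the preimage of each component in $K^3\smallsetminus\{0\}$ is a pair of convex open cones. This is a purely semialgebraic statement, with no reference to the valuation. Tropicalization then carries convexity to tropical convexity, so $\Trop(C)$ is connected; since every point of $\Trop(C)$ is a tropical bitangent by Lemma~\ref{lem:bitangent}, the class is constant on $\overline C$. The paper also remarks that, in lieu of convexity, one may invoke \cite[Corollary~6.10]{JSY}, which proves directly that tropicalizations of definably connected semialgebraic sets over real closed valued fields are connected; citing that result is the natural way to repair your argument.
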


\begin{corollary}
Let $S$ be an equivalence class of tropical bitangents to $\Trop(X)$.  Then:
\begin{enumerate}
\item Either $S$ contains the tropicalization of a definably connected component of the avoidance locus along with the tropicalizations of exactly $4$ $K$-rational bitangents, or 
\item $S$ does not meet the tropicalization of the avoidance locus and does not contain the tropicalization of a $K$-rational bitangent.
\end{enumerate}
\end{corollary}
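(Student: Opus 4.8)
The plan is to deduce the corollary from Theorem~\ref{thm:main} by a counting argument. Write $C_1, \dots, C_n$ for the definably connected components of the avoidance locus, where $n \in \{1,2,4,7\}$; by Theorem~\ref{thm:main} these tropicalize into $n$ pairwise distinct equivalence classes $S_1, \dots, S_n$ among the seven equivalence classes of tropical bitangents of $\Trop(X)$. Recall that each $\overline{C_i}$ contains exactly four $K$-rational bitangents, that each of the $4n$ $K$-rational bitangents of $X$ lies in the closure of exactly one $C_i$, and that the $28$ bitangents over $\overline{K}$ are partitioned into the seven equivalence classes with exactly four in each.

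The one step that needs a genuine argument, and which I expect to be the main obstacle, is the following claim: if $\ell$ is a $K$-rational bitangent lying in $\overline{C_i}$, then $\trop(\ell)$ lies in the class $S_i$. To prove this I would first check that the tropicalization map on $\check\PP^2(K)$ is continuous for the order topology; the subtlety here is that over a real closed valued field other than $\RR$ the order topology is totally disconnected, but this causes no trouble because the valuation is locally constant on $K^\times$ (if $\val(x-x_0) > \val(x_0)$, then $\val(x) = \val(x_0)$), from which continuity of tropicalization follows. A continuous map carries the closure of $C_i$ into the closure of $\trop(C_i)$, which by Theorem~\ref{thm:main} is contained in $S_i$, a closed subset; therefore $\trop(\ell) \in S_i$, and since $\trop(\ell)$ is a tropical bitangent it lies in the equivalence class $S_i$. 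It is conceivable that this claim is already established along the way in the proof of Theorem~\ref{thm:main}, in which case this step is free.

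Granting the claim, the rest is bookkeeping. Since the $4n$ $K$-rational bitangents are distributed four per component, the claim places exactly four of them into each of $S_1, \dots, S_n$ and none into any other equivalence class, because the classes are pairwise disjoint and each tropical bitangent lies in a single class. Now let $S$ be any equivalence class of tropical bitangents. If $S = S_i$ for some $i$, then $S$ contains $\trop(C_i)$, and among the four $\overline{K}$-bitangents whose tropicalizations lie in $S$ we have exhibited the four $K$-rational bitangents of $\overline{C_i}$, so these are all four of them; hence exactly four $K$-rational bitangents tropicalize into $S$, which is alternative~(1). If $S$ is none of the $S_i$, then by the previous paragraph no $K$-rational bitangent tropicalizes into $S$, and the tropicalization of the avoidance locus, which equals $\trop(C_1) \cup \dots \cup \trop(C_n) \subseteq S_1 \cup \dots \cup S_n$, is disjoint from $S$; this is alternative~(2). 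In particular one recovers the dichotomy that the four $\overline{K}$-bitangents of a given class are either all $K$-rational or all not.
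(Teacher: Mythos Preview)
Your proof is correct and follows the same line the paper takes implicitly: the corollary is stated without proof because it is immediate from Theorem~\ref{thm:main} together with the counting facts recalled in the surrounding paragraphs, exactly as you lay out. Your anticipated ``free step'' is indeed free---the proof of Theorem~\ref{thm:main} already shows $\Trop(\overline{C_i}) = \Trop(C_i)$ via the same local constancy of the valuation that you invoke, so no separate closure argument (and in particular no appeal to $S_i$ being closed) is needed.
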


In this way, tropicalization induces a bijection between the semialgebraic path connected components of the avoidance locus to $X$ and the tropical bitangents of $\Trop(X)$ that lift to $K$-rational bitangents.  Moreover, the grouping of $K$-rational bitangents given by containment in the closures of the definably connected components of the avoidance locus agrees with that induced from tropical equivalence classes of bitangents.  

For a line $L$ in $\PP^2$, let $[L]$ denote the corresponding point in $\check \PP^2$.

\begin{lemma} \label{lem:bitangent}
Suppose $[L] \in \check \PP^2(K)$ is in the avoidance locus of $X$.  Then $\Trop(L)$ is a tropical bitangent to $\Trop(X)$.
\end{lemma}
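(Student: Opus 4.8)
The plan is to study the scheme-theoretic intersection $L \cap X$ over the algebraic closure $\bar K = K(\sqrt{-1})$ and to exploit the constraint that $[L]$ lies in the avoidance locus. First I would choose a $K$-rational isomorphism $\PP^1_K \xrightarrow{\ \sim\ } L$ and pull back a defining quartic form for $X$, obtaining a binary quartic form $g$ over $K$ whose roots in $\PP^1(\bar K)$ are exactly the points of $L \cap X$; since $[L]$ is in the avoidance locus, $g$ has no $K$-rational root. I would also record that $L\cap X$ lies entirely in the dense torus $\mathbb{G}_m^2 \subset \PP^2$: a point of $L$ on a coordinate line is cut out by two $K$-linear forms, hence is $K$-rational, so it cannot lie on $X$. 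In particular $\Trop(L\cap X)$ is a well-defined $0$-cycle in $\RR^2$.

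Next I would use that $K$ is real closed: the binary quartic $g$ factors over $K$ into irreducible linear and quadratic forms, and, having no $K$-root, it must be a product $g = q_1 q_2$ of two irreducible quadratics over $K$ (possibly $q_1 = q_2$). Each $q_i$ has a pair of roots $P_i, \bar P_i$ that are distinct and conjugate under the nontrivial element of $\operatorname{Gal}(\bar K/K)$, so $L\cap X = (P + \bar P) + (Q + \bar Q)$ as divisors on $L$, with $P = Q$ when $q_1 = q_2$. The key point is that the valuation on $\bar K$ — the unique extension of that on $K$ in our setting — is invariant under conjugation (for instance, on $\CC\{\!\{t\}\!\}$ complex conjugation of coefficients manifestly preserves the $t$-adic order), so $\trop(\bar P) = \trop(P)$ and $\trop(\bar Q) = \trop(Q)$ in $\RR^2$. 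Hence
\[
\Trop(L\cap X) \;=\; 2\,\trop(P) + 2\,\trop(Q),
\]
a $0$-cycle supported on at most two points and with all multiplicities even.

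The last step transfers this to the tropical side. Using compatibility of tropicalization with proper intersection of subvarieties of a torus, the stable tropical intersection cycle $\Trop(L)\cdot\Trop(X)$ equals $\Trop(L\cap X)$; in particular it has even multiplicity at every point and is supported on $\{\trop(P),\trop(Q)\}$. Consequently every connected component of the set $\Trop(L)\cap\Trop(X)$ carries tropical multiplicity $0$, $2$, or $4$. Since by tropical B\'ezout these multiplicities sum to $4$, and (as explained below) each component carries strictly positive multiplicity, $\Trop(L)\cap\Trop(X)$ is either connected or has exactly two connected components, each of tropical multiplicity $2$; either way $\Trop(L)$ is a tropical bitangent to $\Trop(X)$.

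The hard part will be the structural input used in the last step: that no connected component of the intersection of a tropical line with a \emph{smooth} tropical plane curve can have tropical multiplicity $0$. Ruling this out requires a local analysis of the possible ways $\Trop(L)$ can meet $\Trop(X)$ — most delicately when a ray of $\Trop(L)$ runs along an unbounded edge of $\Trop(X)$, so that a whole ray is a component of the intersection — and one has to check, using the balancing condition together with the unimodularity coming from tropical smoothness, that a generic translate of $\Trop(L)$ still meets $\Trop(X)$ near such a component. I expect this to follow from the description of tropical lines and their intersections with smooth tropical quartics in \cite{BLMPR}; everything else (the Galois argument, the factorization over a real closed field, and conjugation-invariance of the valuation) is routine.
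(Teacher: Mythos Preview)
Your argument is essentially the paper's: the avoidance hypothesis forces $L\cap X$ to be two $\operatorname{Gal}(\bar K/K)$-conjugate pairs, conjugate points have equal valuation and hence equal tropicalization, so the tropical intersection cycle is $2\,\trop(P)+2\,\trop(Q)$ and every connected component of $\Trop(L)\cap\Trop(X)$ carries even multiplicity summing to $4$.  The paper compresses all of this into three sentences and, in place of your projected case analysis, simply cites \cite[Theorem~6.4]{OR13}: Osserman--Rabinoff's lifting theorem says that for each connected component $C$ of $\Trop(L)\cap\Trop(X)$ the tropical intersection multiplicity along $C$ equals the number (with multiplicity) of points of $L\cap X$ tropicalizing into $C$, and the same circle of results guarantees this number is strictly positive.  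That single reference is exactly what dissolves your ``hard part''; the balancing/unimodularity case analysis you anticipate via \cite{BLMPR} would work but is unnecessary.  Your added remarks---that $L\cap X$ lies in the dense torus because any point of $L$ on a coordinate line is $K$-rational, and that the valuation on $\bar K$ is Galois-invariant---are correct and make explicit what the paper leaves implicit.
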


\begin{proof}
Since $[L]$ is in the avoidance locus of $X$, the geometric intersection of $L$ with $X$ consists of two pairs of conjugate points. Each such pair tropicalizes to one point. Thus the intersection multiplicity of $\Trop(L)$ with $\Trop(X)$ along each connected component of their intersection is either $2$ or $4$, depending on whether it contains the tropicalization of $1$ or $2$ of these pairs, by \cite[Theorem~6.4]{OR13}.  In particular, $\Trop(L)$ is a tropical bitangent of $\Trop(X)$.
\end{proof}

\begin{lemma} \label{lem:convex}
The preimage under tropicalization of each definably connected component of the avoidance locus of $X$ is a union of two convex open cones in $K^3 \smallsetminus \{0\}$.
\end{lemma}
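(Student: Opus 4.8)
The plan is to describe the avoidance locus of $X$ explicitly through sign conditions coming from the components of $X(K)$, and then to observe that each such system of sign conditions cuts out a convex cone. Write $q\colon K^3\smallsetminus\{0\}\to\check\PP^2(K)$ for the quotient by scaling and $\widehat X\subset K^3$ for the affine cone over $X$, so that a nonzero $v\in K^3$ represents the line $L_v=\{[w]\in\PP^2 : \langle v,w\rangle=0\}$ with $[L_v]=q(v)$, and $[L_v]$ lies in the avoidance locus precisely when the linear form $\ell_v=\langle v,\,\cdot\,\rangle$ is nowhere zero on $\widehat X(K)\smallsetminus\{0\}$. Let $O_1,\dots,O_s$ be the definably connected components of $X(K)$; these are definably compact. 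The first step is to show that the preimage $\widehat O_i=q^{-1}(O_i)$ has exactly two definably connected components, $O_i^+$ and $O_i^-$, interchanged by $v\mapsto-v$. Granting this, $\widehat X(K)\smallsetminus\{0\}=\bigsqcup_{i}(O_i^+\sqcup O_i^-)$, and since $\ell_v(-w)=-\ell_v(w)$ while a nowhere-zero definable function on a definably connected set has constant sign, $[L_v]$ lies in the avoidance locus if and only if $\ell_v$ has constant sign along each $O_i^+$.

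Given this description, for a sign vector $\vec\epsilon=(\epsilon_1,\dots,\epsilon_s)\in\{\pm1\}^s$ I would consider
\[
V_{\vec\epsilon}=\bigl\{\,v\in K^3 : \epsilon_i\,\ell_v(w)>0\ \text{for every}\ w\in O_i^+\ \text{and every}\ i\,\bigr\},
\]
so that the preimage under $q$ of the avoidance locus is $\bigcup_{\vec\epsilon}V_{\vec\epsilon}$. Each $V_{\vec\epsilon}$ is a convex cone not containing the origin, because each of its defining conditions is a positivity condition on a linear functional of $v$; and it is open because, $O_i^+$ being invariant under positive scaling, the condition only needs to be imposed on the definably compact slice $O_i^+\cap\{v_1^2+v_2^2+v_3^2=1\}$, making $V_{\vec\epsilon}$ the locus where finitely many parametric minima of continuous functions over definably compact sets are positive. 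Moreover $V_{-\vec\epsilon}=-V_{\vec\epsilon}$, so $q(V_{\vec\epsilon})=q(V_{-\vec\epsilon})$; and since $q(v)$ determines the sign vector of $v$ along $(O_i^+)_i$ up to an overall sign, the distinct nonempty sets among the $q(V_{\vec\epsilon})$ are pairwise disjoint. These sets are open, as $q$ is open, and definably connected, as continuous images of convex sets, and the complement of each within the avoidance locus is a finite union of such sets, hence open; so they are precisely the definably connected components of the avoidance locus. It follows that each component $C$ equals $q(V_{\vec\epsilon})$ for a unique $\vec\epsilon$ up to sign, and therefore $q^{-1}(C)=V_{\vec\epsilon}\sqcup V_{-\vec\epsilon}$ is a disjoint union of two convex open cones in $K^3\smallsetminus\{0\}$, as claimed.

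The point I expect to require the most care is the assertion about $\widehat O_i$, since over a real closed field other than $\RR$ the connectedness arguments of classical topology are not directly available. That $\widehat O_i$ has at most two definably connected components I would obtain by deformation retracting it onto $\widehat O_i\cap\{v_1^2+v_2^2+v_3^2=1\}$ and noting that the latter maps onto $O_i$ by a two-to-one map that is open and closed — closed because its source is closed in the definably compact unit sphere, hence definably compact — so that each of its definably connected components surjects onto the definably connected $O_i$, whence there can be at most two. That there are at least two, and that $v\mapsto-v$ interchanges them, both follow from the fact, recalled above, that the avoidance locus is nonempty: were $\widehat O_i$ definably connected, a nowhere-zero $\ell_v$ would have constant sign on it, contradicting $\ell_v(-w)=-\ell_v(w)$; and the same identity shows that $v\mapsto-v$ cannot fix the component $O_i^+$. (Throughout we assume $X(K)\neq\emptyset$; when $X(K)=\emptyset$ the avoidance locus is all of $\check\PP^2(K)$.)
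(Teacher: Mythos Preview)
Your proof is correct and rests on the same underlying idea as the paper's, but reaches it by a different and more self-contained route. The paper first invokes the Tarski principle to reduce to $K=\RR$ and then cites Kummer's identification of the avoidance locus with the projectivization of the cone of definite holomorphic differentials on $X$; since $X$ is canonically embedded, these differentials are exactly the restrictions to $X$ of linear forms $\ell_v$, and a definite differential determines an orientation on each oval of $X(\RR)$. Convexity then follows because a convex combination of two definite forms inducing the same orientations is again definite. Your sign vector $\vec\epsilon$ is precisely this orientation data, and your $V_{\vec\epsilon}$ is the corresponding cone of definite forms, so the two arguments coincide at the core. What you gain is an argument that stays over the given real closed field and uses only elementary o-minimal facts about definable connectedness and compactness, avoiding both the transfer step and the appeal to \cite{kum:19}; what the paper gains is brevity, since the work you do to show $\widehat O_i$ has two components and that each $V_{\vec\epsilon}$ is open is absorbed into those references. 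Your closing remark about $X(K)=\emptyset$ is apt: in that case the preimage is all of $K^3\smallsetminus\{0\}$, which is not literally a union of two convex cones, though this does not affect the use of the lemma in the proof of Theorem~\ref{thm:main}.
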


\begin{proof}
By the Tarski principle, it suffices to prove this statement for a smooth plane quartic $Y$ over $\RR$.  The avoidance locus is open, so its preimage $U$ in $K^3 \smallsetminus \{0\}$ is an open cone. We claim that $U$ is a disjoint union of two convex connected components, each of which is the negative of the other.  To see this, note that each connected component of $U$  determines an orientation on the ovals of $Y(\RR)$, via the correspondence between points in the avoidance locus and definite nowhere vanishing differentials; see Corollary~2.2 and the discussion following Definition~4.1 in \cite{kum:19}.  The lemma follows, since convex combinations in $K^3 \smallsetminus \{0\}$ correspond to convex combinations of these differentials.
\end{proof}

\begin{proof}[Proof of Theorem~\ref{thm:main}]
Let $S$ be a definably connected component of the avoidance locus of $X$.  Then the preimage of $S$ in $K^3 \smallsetminus \{0\}$ is a union of two convex cones, each of which is the negative of the other, by Lemma~\ref{lem:convex}. Now $\Trop(S)$ is the image of either one of these components. It follows that $\Trop(S)$ is tropically convex, and hence connected.  Note that very small deformations of a line, with respect to the nonarchimedean norm on $K$, do not change the tropicalization, so $\Trop(\overline S) = \Trop(S)$.  By Lemma~\ref{lem:bitangent}, every tropical line in $\Trop(\overline S)$ is a bitangent, so all $4$ of the $K$-rational bitangent lines in $\overline S$ tropicalize into the same equivalence class of tropical bitangents.  

By \cite{BLMPR, JensenLen18}, each equivalence class of tropical bitangents to $\Trop(X)$ contains the tropicalizations of exactly $4$ geometric bitangents.  Thus, distinct definably connected components of the avoidance locus must tropicalize into distinct equivalence classes of tropical bitangents, as claimed.
\end{proof}

In our proof of Theorem~\ref{thm:main}, we have used convexity of definably connected components of the avoidance locus.  This relates to the observed phenomenon that equivalence classes of tropical bitangents are very often tropically convex \cite{CM23}.  Alternatively, one could argue that tropicalizations of definably connected semialgebraic sets are connected, using \cite[Corollary~6.10]{JSY}.

\medskip

\noindent \textbf{Acknowledgments.} 
HM supported in part by DFG-grant MA 4797/9-1. SP supported in part by NSF grants DMS--2001502 and DMS--2053261.  KS supported in part by the Trond Mohn Foundation project ``Algebraic and topological cycles in complex and tropical geometry" and the Center for Advanced Study Young Fellows Project ``Real Structures in Discrete, Algebraic, Symplectic, and Tropical Geometries". 

\bibliography{AvoidanceLoci}

\end{document}